\documentclass[reqno]{amsart}
\usepackage{amssymb,latexsym,amsmath,amsthm,enumerate,amsbsy}
\usepackage[mathscr]{eucal}
\usepackage{framed,color,graphicx}
\usepackage{mathrsfs}
\usepackage{cite}
\usepackage[all]{xy}
\usepackage{tikz}

\makeatletter
\@namedef{subjclassname@2020}{\textup{2020} Mathematics Subject Classification}
\makeatother

\usetikzlibrary{positioning,decorations.pathreplacing,patterns,decorations.pathmorphing}
\tikzset{%
element/.style={draw, shape=circle, fill=white, inner sep=1.4pt}
}

\DeclareSymbolFont{bbold}{U}{bbold}{m}{n}
\DeclareSymbolFontAlphabet{\mathbbold}{bbold}

\theoremstyle{plain}
\newtheorem{theorem}{Theorem}[section]
\newtheorem{lemma}[theorem]{Lemma}
\newtheorem{corollary}[theorem]{Corollary}
\newtheorem{proposition}[theorem]{Proposition}

\newtheorem{problem}[theorem]{Problem}

\theoremstyle{definition}

\newcommand{\ba}{\mathbf{a}}

\newcommand{\bt}{\mathbf{t}}
\newcommand{\bu}{\mathbf{u}}
\newcommand{\bv}{\mathbf{v}}

\begin{document}

\title[Power semirings of finite groups]
{The finite basis problem for the power semirings of finite groups}

\author{Zidong Gao}
\address{School of Mathematics, Northwest University, Xi'an, 710127, Shaanxi, P.R. China}
\email{zidonggao@yeah.net}

\author{Miaomiao Ren}
\address{School of Mathematics, Northwest University, Xi'an, 710127, Shaanxi, P.R. China}
\email{miaomiaoren@yeah.net}

\author{Xiaolei Shao}
\address{School of Mathematics, Northwest University, Xi'an, 710127, Shaanxi, P.R. China}
\email{xiaoleishao@yeah.net}

\author{Mengya Yue}
\address{School of Mathematics, Northwest University, Xi'an, 710127, Shaanxi, P.R. China}
\email{myayue@yeah.net}

\subjclass[2020]{16Y60, 03C05}
\keywords{Power semiring, finite group, finite basis problem, additively idempotent semiring, variety.}

\maketitle

\begin{abstract}
For any group $G$,
the set of all nonempty subsets of $G$ forms an additively idempotent semiring
under set-theoretic union and elementwise multiplication,
called the power semiring of $G$ and denoted by $\mathcal{P}(G)$.
We prove that for a finite group $G$,
$\mathcal{P}(G)$ has no finite basis for its identities if and only if $|G| \geq 3$.
This completes the classification of the power semirings of finite groups with respect to the finite basis property.
\end{abstract}

\section{Introduction}
A \emph{variety} is a class of algebras that is closed under taking subalgebras,
homomorphic images, and arbitrary direct products. By Birkhoff's celebrated theorem, a class
of algebras is a variety if and only if it is an \emph{equational class};
that is, the class of all algebras satisfying a certain set of identities;
such a set is  an \emph{equational basis} of the variety.
A variety is \emph{finitely based} if it admits a finite equational basis;
otherwise, it is \emph{nonfinitely based}.

An algebra $A$ is finitely based (nonfinitely based) if the variety $\mathsf{V}(A)$ it generates is finitely based or not.
The finite basis problem for a class of algebras, one of the central problems in universal algebra,
concerns the classification of its members with respect to the finite basis property.
The present paper focuses on the finite basis problem for a class of additively idempotent semirings:
the power semirings of finite groups.

An \emph{additively idempotent semiring} (or \emph{ai-semiring} for short) is an algebra \((S, +, \cdot)\)
such that the additive reduct \((S, +)\) is an idempotent commutative semigroup,
the multiplicative reduct \((S, \cdot)\) is a semigroup, and the distributive laws hold:
\[
x(y+z) \approx xy + xz, \quad (x+y)z \approx xz + yz.
\]
Distributive lattices \cite{mmt}, max-plus algebras \cite{aei},
and the flat extensions of groups \cite{jac:flat} are all examples of ai-semirings.
Such algebras have found significant applications in branches of mathematics and computer science,
including tropical geometry~\cite{ms}, theoretical computer science~\cite{go}, and information science~\cite{gl}.

Let $S$ be an ai-semiring. The relation $\leq$ on $S$ defined by
\[
a \leq b \Leftrightarrow a+b=b
\]
is a partial order; indeed, $(S, \leq)$ is an upper semilattice, with the supremum of two elements $a$ and $b$ given by $a+b$.
Moreover, the order $\leq$ is readily seen to be compatible with multiplication,
which explains why such an algebra is also called a \emph{semilattice-ordered semigroup}.
Whenever an order on an ai-semiring is mentioned, it always refers to the order defined above.

A natural source of ai-semirings is the powerset construction applied to an arbitrary semigroup.
Namely, for a semigroup $S$, let $\mathcal{P}(S)$ denote the set of all nonempty subsets of $S$.
Then $\mathcal{P}(S)$ becomes an ai-semiring, called the \emph{power semiring} of $S$~\cite{zhao02},
whose addition and multiplication are defined by
\[
A+B = A \cup B, \quad  A\cdot B = \{ab \mid a \in A,\ b \in B\}.
\]
Note that $S$ embeds into the multiplicative reduct of $\mathcal{P}(S)$ via the mapping $s \mapsto \{s\}$.
For detailed background on power semirings, we refer the reader to~\cite{d, d2, dgv, gv}.

Let $S$ be an ai-semiring.
One can construct an ai-semiring $S^0$ from $S$ by adjoining a new element $0$.
The operations on $S^0=S \cup \{0\}$ are defined by:
\[
(\forall a\in S\cup \{0\}) \quad a+0=0+a=a,\quad a0=0a=0,
\]
while preserving the original operations on $S$.
Then $S$ is a subsemiring of $S^0$, and
$0$ is both the additive minimum element and the multiplicative zero of $S^0$.

In particular, if the empty set is included in the powerset construction above,
the resulting algebra is isomorphic to $\mathcal{P}(S)^0$,
the ai-semiring obtained from $\mathcal{P}(S)$ by adjoining a new element as above.
Some authors adopt this convention and define the power semiring
as the algebra of all subsets of $S$ (including the empty set), often denoted simply by $\mathcal{P}(S)$.
In this paper, however, we reserve $\mathcal{P}(S)$ for the nonempty subsets
and write $\mathcal{P}(S)^0$ when the empty set is included.

Although $S$ and $S^0$ differ only by a single adjoined element,
the relationship between their finite basis properties is far from straightforward
and has been investigated in several works \cite{gjrz2, rz, wrz, yrg}.
It is known that the finite basis property is not always preserved from $S$ to $S^0$:
there are examples where $S$ is finitely based but $S^0$ is nonfinitely based (see \cite{yrg}).
Conversely, no example has yet been found where $S$ is nonfinitely based while $S^0$ is finitely based.
Jackson et al.~\cite[Problem 7.4(3)]{jrz} proposed the following problem:

\begin{problem}\label{prob123}
Under what conditions is the $($non$)$finite basis property preserved when passing from an ai-semiring $S$ to $S^0$?
\end{problem}

A variety is \emph{locally finite} if every finitely generated algebra in it is finite.
A finite algebra $A$ is \emph{inherently nonfinitely based} (INFB)
if it is not contained in any finitely based locally finite variety
(equivalently, every locally finite variety containing it is nonfinitely based).
Since every finite algebra generates a locally finite variety,
it follows that every INFB algebra must be nonfinitely based.
Moreover, every finite algebra whose variety contains an INFB algebra is also INFB.

To the best of our knowledge,
the study of the finite basis problem for the power semirings of semigroups was initiated by Dolinka~\cite{d}.
He proved that $\mathcal{P}(S)^0$ is INFB if $S$ is a finite semigroup such that the five-element Brandt semigroup divides $S$,
and also whenever $S$ is INFB (see Corollaries 6.3 and 6.4 of \cite{d}).
He also posed the problem of studying the finite basis problem for $\mathcal{P}(S)^0$,
where $S$ is a finite semigroup,
and raised the question of describing the finite groups $G$ for which $\mathcal{P}(G)^0$ is nonfinitely based~\cite[Problem 6.5]{d}.

Subsequently, he proved in~\cite[Theorem 2]{d2} that, for a finite group $G$,
the multiplicative reduct of $\mathcal{P}(G)^0$ is not INFB
if and only if $G$ is a Dedekind group; that is, every subgroup of $G$ is normal.
By \cite[Theorem 2.11(3)]{jrz},
an ai-semiring is not INFB if its multiplicative reduct is not INFB.
Therefore, when $G$ is a finite Dedekind group, the semiring $\mathcal{P}(G)^0$ itself is not INFB,
and hence neither is its subsemiring $\mathcal{P}(G)$.

A finite ai-semiring is \emph{strongly nonfinitely based} (SNFB)
if every finite ai-semiring whose variety contains it is nonfinitely based.
Every SNFB ai-semiring is itself nonfinitely based, since it contains itself.
We suspect that the converse is false, although no counterexample is currently known.
Every INFB ai-semiring is SNFB, but the converse is not true.
Jackson, Ren, and Zhao~\cite[Theorem 6.1]{jrz}
proved that every finite ai-semiring whose multiplicative reduct contains a nonabelian nilpotent subgroup is SNFB.
In particular, if $G$ is a finite group containing a nonabelian nilpotent subgroup,
then $\mathcal{P}(G)$ is SNFB, and so
both $\mathcal{P}(G)$ and $\mathcal{P}(G)^0$ are nonfinitely based.

Combining the above results,
if $G$ is a finite Dedekind group containing a nonabelian nilpotent subgroup,
then $\mathcal{P}(G)$ is SNFB but not INFB.
Indeed, \cite[Example~6.7]{jrz} shows that $\mathcal{P}(Q_8)$ provides such an example,
where $Q_8$ denotes the quaternion group.
The problem of classifying $\mathcal{P}(S)^0$ for finite semigroups $S$ with respect to the finite basis property was reiterated in~\cite[Problem~7.2]{jrz}.

Subsequently, Gusev and Volkov~\cite[Theorem~1.1, Remark~2]{gv}
proved that $\mathcal{P}(G)^0$ and $\mathcal{P}(G)$ are both nonfinitely based
whenever $G$ is a finite nonabelian solvable group.
Dolinka, Gusev, and Volkov~\cite[Theorem~A]{dgv} further showed that $\mathcal{P}(S)^0$ is nonfinitely based
if $S$ is a finite inverse semigroup such that either $S$ is not Clifford or all subgroups of $S$
are solvable and at least one of them is nonabelian.

Despite these advances, the finite basis problem for $\mathcal{P}(G)^0$ and $\mathcal{P}(G)$ remained open for all finite groups,
even in the abelian case.
In this paper, we completely resolve the finite basis problem for the power semirings $\mathcal{P}(G)$ of finite groups $G$.
Our main result is the following.

\begin{theorem}\label{mainthm}
Let $G$ be a finite group. Then the power semiring $\mathcal{P}(G)$ is nonfinitely based if and only if $|G| \geq 3$.
\end{theorem}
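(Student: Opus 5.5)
The proof splits into the easy direction ($|G|\le 2$ gives a finite basis) and the substantial one ($|G|\ge 3$ gives no finite basis).

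\emph{The case $|G|\le 2$.} If $|G|=1$, then $\mathcal{P}(G)$ is trivial and is finitely based. If $|G|=2$, write $G=\{1,a\}$. Then $\mathcal{P}(G)$ has three elements $\{1\},\{a\},G$. It satisfies $x+y\approx y+x$, and it satisfies the identities $x^3\approx x$ and $xy\approx yx$ on singletons but not on $G$, since $G\cdot G=G$. I would show directly that $\mathcal{P}(\mathbb{Z}_2)$ is finitely based, using an explicit finite system such as
\[
xy\approx yx,\qquad x^3\approx x+x^2+x^3 \ \text{(up to the correct form)},\qquad x^2y\approx x^2y+y .
\]
The strategy is a normal form argument. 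Every ai-semiring term is a sum of words. In the commutative setting each word is determined by its exponent vector, so a term is a finite set of exponent vectors. An identity holds in $\mathcal{P}(\mathbb{Z}_2)$ if and only if the two sides agree under every assignment to $\{1\},\{a\},G$. Assigning $G$ to a variable that occurs in a word makes that word equal to $G$. The semantic condition is therefore: for each subset $X$ of variables, the set of parity vectors (mod 2) of words that avoid $X$ coincides on both sides, or else both sides contain a word that uses a variable in $X$. The plan is to pick a basis that allows every term to be rewritten into a canonical "closure" sum determined by these invariants. Alternatively, the paper may already cite known classifications of three-element ai-semirings (Zhao, Ren et al.), in which case I would simply invoke the fact that every three-element ai-semiring except a specific one is finitely based, and check that $\mathcal{P}(\mathbb{Z}_2)$ is not the exception.

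\emph{The case $|G|\ge3$: reduction to a few minimal groups.} The nonfinite basis property is not obviously inherited upward, so I would instead prove that $\mathcal{P}(G)$ is nonfinitely based using a single syntactic criterion, namely a family of identities $\sigma_n$ that hold in $\mathcal{P}(G)$ but are not consequences of the identities of $\mathcal{P}(G)$ in fewer than $n$ variables. If $G$ has an element of order at least $3$, it contains a cyclic subgroup $C_m$ with $m\ge3$. Otherwise $G$ has exponent $2$ and $|G|\ge 4$, so it contains $\mathbb{Z}_2\times\mathbb{Z}_2$. Nonabelian groups are already covered when they contain a nonabelian nilpotent subgroup (by \cite{jrz}) or are solvable (by \cite{gv}), but a uniform argument is preferable.

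\emph{Choice of the family $\sigma_n$.} For a finite group of exponent $e$, the identities should capture the following fact: a sum over all "cyclic" products $x_{\pi(1)}\cdots x_{\pi(n)}$ of $n$ variables is absorbed in a pattern that requires all $n$ variables simultaneously. A candidate is
\[
\sigma_n:\quad x_1^{e}x_2^{e}\cdots x_n^{e}\;+\;\sum_i(\text{words omitting }x_i)\;\approx\;\text{same}+x_1x_2\cdots x_n\,(x_1\cdots x_n)^{e-1}\text{-type word}.
\]
Here $\sigma_n$ holds because of subgroup-closure behaviour of products of subsets: if $A_1\cdots A_n$ contains $1$ in a structured way, the sets generate. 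The key step is to build, for each $n$, a finite ai-semiring $S_n$, probably a quotient-like construction on words or an ideal extension. It must satisfy every identity of $\mathcal{P}(G)$ in fewer than $n$ variables and fail $\sigma_n$. The standard approach is to take $S_n$ to be the subsemiring of $\mathcal{P}$(a suitable group or semigroup) generated by carefully chosen sets. Restricting any $n-1$ variables then factors through $\mathcal{P}(G)$, typically via a homomorphism collapsing one coordinate.

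\emph{Main obstacle.} The hardest part is the case $|G|\ge3$: designing $\sigma_n$ and $S_n$ so that a single construction works for both $C_m$ ($m\ge3$) and $\mathbb{Z}_2^2$, and then transferring from the subgroup $H$ to $G$. For the transfer I would aim to prove that $\mathcal{P}(H)$ and $\mathcal{P}(G)$ satisfy the same identities in fewer than $n$ variables on the relevant words, or that $S_n$ embeds into $\mathsf V(\mathcal{P}(G))$ only in its $(n-1)$-generated parts. I would first attempt the cyclic case $C_3$ concretely to discover the right $\sigma_n$.
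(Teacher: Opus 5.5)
\textbf{The direction $|G|\ge 3$ is not actually proved.} You describe the generic template for nonfinite basis proofs: identities $\sigma_n$ valid in $\mathcal{P}(G)$, and algebras $S_n$ that violate $\sigma_n$ but whose small-generated subalgebras lie in $\mathsf{V}(\mathcal{P}(G))$. But you never fix a $\sigma_n$: the displayed candidate is schematic, and no reason is given why it holds in $\mathcal{P}(G)$. You never construct $S_n$. Above all, you give no mechanism for the decisive step of placing the small subalgebras of $S_n$ inside $\mathsf{V}(\mathcal{P}(G))$. The reduction to subgroups $C_m$ ($m\ge 3$) or $\mathbb{Z}_2\times\mathbb{Z}_2$ does not help, since, as you note yourself, the nonfinite basis property of $\mathcal{P}(H)$ does not transfer to $\mathcal{P}(G)$.

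On the easy side, your fallback of citing that $S_7$ is the only nonfinitely based ai-semiring of order at most three is fine. Your description of $\mathcal{P}(\mathbb{Z}_2)$ is wrong, though. Since $G^3=G$ and $\mathcal{P}(G)$ is commutative for abelian $G$, the identities $x^3\approx x$ and $xy\approx yx$ hold on all of $\mathcal{P}(\mathbb{Z}_2)$. Your tentative $x^3\approx x+x^2+x^3$ fails at $x=\{a\}$: the left side is $\{a\}$ and the right side is $G$. A basis is $x^3\approx x$, $xy\approx yx$, $x^2+y^2\approx x^2y^2$.

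\textbf{The missing idea is to work through $S_7$, uniformly in $G$.} If $|G|\ge 3$ and $H=G\setminus\{1\}$, then $H^2=G$: for $g\ne 1$ pick $a\in H\setminus\{g\}$ and write $1=a\cdot a^{-1}$ and $g=a\cdot a^{-1}g$. So $\{\{1\},H,G\}$ is a copy of $S_7$, and $\mathsf{V}(S_7)\subseteq\mathsf{V}(\mathcal{P}(G))$.

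Take $S_n=M_{\mathbb{H}_n}$, the hypergraph semiring of a $k$-uniform hypergraph with $k=|G|$, girth $>k\binom{kn}{2}$ and chromatic number $>|G|$; such hypergraphs exist by Erd\H{o}s--Hajnal. By Jackson--Ren--Zhao, every $n$-generated subsemiring of $M_{\mathbb{H}_n}$ lies in $\mathsf{V}(M_c(a_1\cdots a_k))=\mathsf{V}(S_7)\subseteq\mathsf{V}(\mathcal{P}(G))$.

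For $\sigma_n$ take $z\preceq z\bt_{\mathbb{H}_n}$. It fails in $M_{\mathbb{H}_n}$ under $x_v\mapsto\ba_v$, $z\mapsto 1$, since $1\not\le\ba$ in a flat semiring. It holds in $\mathcal{P}(G)$. Choosing $h_v\in\varphi(x_v)$ colours the vertices with at most $|G|$ colours, so some hyperedge is monochromatic, say with colour $g$. Hence $1=g^{|G|}\in\varphi(\bt_{\mathbb{H}_n})$, and so $\varphi(z)\subseteq\varphi(z)\varphi(\bt_{\mathbb{H}_n})$.

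Thus, for every $n$, $M_{\mathbb{H}_n}\notin\mathsf{V}(\mathcal{P}(G))$ while all its $n$-generated subalgebras lie in it. No case analysis on the structure of $G$ is needed.
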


For the proof, we first note that if $|G| < 3$, then $G$ is either a trivial group or a cyclic group of order $2$.
If $G$ is trivial, then $\mathcal{P}(G)$ is trivial and hence finitely based.
If $G$ is cyclic of order $2$, then $\mathcal{P}(G)$ is isomorphic to the three-element ai-semiring $A$,
which is finitely based by \cite[Theorem 7.3]{jac:flat}; its Cayley tables are given in Table~\ref{tbs10} below.
More precisely, by \cite[Theorem 2.9]{rz},
the ai-semiring variety $\mathsf{V}(A)$ is defined by the following three identities:
\[
x^3\approx x,\quad xy\approx yx,\quad x^2+y^2\approx x^2y^2.
\]

\begin{table}[ht]
\centering
\caption{The Cayley tables of $A$} \label{tbs10}
\begin{tabular}{c|ccc}
$+$      &$0$&$a$&$1$\\
\hline
$0$      &$0$&$0$&$0$\\
$a$      &$0$&$a$&$0$\\
$1$      &$0$&$0$&$1$\\
\end{tabular}\qquad\qquad
\begin{tabular}{c|ccc}
$\cdot$  &$0$&$a$&$1$\\
\hline
$0$      &$0$&$0$&$0$\\
$a$      &$0$&$1$&$a$\\
$1$      &$0$&$a$&$1$\\
\end{tabular}
\end{table}

Therefore, to prove Theorem~\ref{mainthm}, it remains to show the converse:
$\mathcal{P}(G)$ is nonfinitely based whenever $|G| \geq 3$.
The rest of the paper is devoted to this task.
The necessary preliminaries are collected in Section~2.
In Section~3, we establish a sufficient condition for an ai-semiring variety to be nonfinitely based,
and apply it to prove the desired converse.

\section{Preliminaries}\label{prelim}
We begin by introducing some notions and notation.
Let $X$ be a countably infinite set of variables, and let $X^+$ denote the free semigroup over $X$.
Due to distributivity, every \emph{ai-semiring term} (or simply a \emph{term}) over $X$
can be expressed as a finite sum of words from $X^+$.
In what follows, terms are denoted by bold lowercase letters $\mathbf{u}, \mathbf{v}, \mathbf{w}, \dots$,
while ordinary lowercase letters $x, y, z, \dots$ stand for variables.
Let $\bu$ be a term. Then $c(\bu)$ denotes the set of variables occurring in $\bu$.

An \emph{ai-semiring identity} (or simply an \emph{identity}) over $X$ is a formal expression of the form
\[
\bu\approx \bv,
\]
where $\bu$ and $\bv$ are terms over $X$.
The set $P_f(X^+)$ of all nonempty finite subsets of $X^+$ is a subsemiring of the power semiring $\mathcal{P}(X^+)$;
by \cite[Theorem 2.5]{kp}, it is a free ai-semiring.

Let $S$ be an ai-semiring and $\bu\approx \bv$ an identity over $\{x_1,\ldots,x_n\}$.
We say that $S$ \emph{satisfies} $\bu\approx \bv$, or that $\bu\approx \bv$ \emph{holds} in $S$,
if
\[
\bu(a_1, a_2, \dots, a_n) = \bv(a_1, a_2, \dots, a_n)
\]
for all $a_1, a_2, \dots, a_n \in S$,
where $\bu(a_1, a_2, \ldots, a_n)$ denotes the evaluation of $\bu$ in $S$ under the assignment $x_i\mapsto a_i$,
and similarly for $\bv(a_1, a_2, \ldots, a_n)$.
Equivalently, $\varphi(\bu) = \varphi(\bv)$ for every semiring homomorphism $\varphi \colon P_f(X^+) \to S$.
Such a homomorphism is also called an \emph{assignment},
and is uniquely determined by the images of the elements of $X$;
we may denote it simply by $\varphi \colon X \to S$.

For convenience, we write $\bu \preceq \bv$ to denote the identity $\bv \approx \bv+\bu$,
and refer to such an expression as an \emph{ai-semiring inequality} (or simply an \emph{inequality}).
The ai-semiring $S$ satisfies $\bu \preceq \bv$ if and only if
$\varphi(\bu) \leq \varphi(\bv)$ for every assignment $\varphi \colon X \to S$.

Next, we introduce an important class of ai-semirings, namely flat semirings.
By a \emph{flat semiring} we mean an ai-semiring
whose multiplicative reduct has a zero element $0$ and satisfies $a+b=0$ for all distinct $a,b\in S$.
An important class of flat semirings is provided by hypergraph semirings,
which have proved to be a powerful tool in the study of the finite basis problem~\cite{jrz, gjrz, gjrz2}.
Before introducing them, we recall the necessary preliminaries on hypergraphs;
for further details, we refer the reader to \cite{hj,jrz}.

Let $k \geq 3$ be an integer.
A \emph{$k$-uniform hypergraph} $\mathbb{H}$ is a pair $\langle V, E\rangle$,
where $E$ is a family of $k$-element subsets of a set $V$.
Each element of $V$ is a \emph{vertex} of $\mathbb{H}$,
and each element of $E$ is a \emph{hyperedge} of $\mathbb{H}$.

Let $\mathbb{H}$ be a $k$-uniform hypergraph.
A \emph{cycle} of $\mathbb{H}$ is an alternating sequence
\[
v_1, e_1, v_2, e_2, \ldots, v_n, e_n, v_1
\]
of distinct vertices and hyperedges such that $v_1 \in e_1 \cap e_n$ and $v_{i+1} \in e_i \cap e_{i+1}$ for $1 \leq i < n$.
The \emph{length} of this cycle is $n$.
The \emph{girth} of $\mathbb{H}$, denoted by $g(\mathbb{H})$, is the length of its shortest cycles.
If $\mathbb{H}$ contains no cycles, then $\mathbb{H}$ is called a \emph{hyperforest}, and we set $g(\mathbb{H}) = \infty$.

Let $n\geq 2$ be an integer.
Then $\mathbb{H}$ is \emph{$n$-colourable}
if there exists a mapping $\varphi: V \to \{1, 2, \ldots, n\}$ such that $|\varphi(e)| \geq 2$ for all $e \in E$
(i.e., no hyperedge is monochromatic).
The smallest such $n$ for which $\mathbb{H}$ is $n$-colourable is called the \emph{chromatic number} of $\mathbb{H}$, denoted by $\chi(\mathbb{H})$.

The following result is due to Erd\H{o}s and Hajnal~\cite{eh}; see Theorems 2.6 and 2.7 of~\cite{hj} for further discussion.

\begin{lemma}\label{kml}
For any integers $k,m,\ell \geq 2$, there exists a $k$-uniform hypergraph $\mathbb{H}$ such that $g(\mathbb{H}) > \ell$ and $\chi(\mathbb H)>m$.
\end{lemma}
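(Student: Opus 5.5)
The plan is to use the probabilistic deletion method of Erd\H{o}s, adapted to $k$-uniform hypergraphs. Fix $k,m,\ell\geq 2$. Choose a real number $\theta$ with $0<\theta<1/\ell$. Let $n$ be large, and let $\mathbb{H}_n$ be the random $k$-uniform hypergraph on $V=\{1,\dots,n\}$ in which each $k$-subset is a hyperedge independently with probability $p=n^{1-k+\theta}$. Two facts will be shown to hold with probability greater than $1/2$ each, so both hold simultaneously for some outcome:
\begin{enumerate}
\item[(a)] $\mathbb{H}_n$ has fewer than $n/2$ cycles of length at most $\ell$;
\item[(b)] every set of $t=\lceil n/(2m)\rceil$ vertices contains a hyperedge.
\end{enumerate}
Here a set is \emph{independent} if it contains no hyperedge.

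For (a), count cycles of a fixed length $j$ with $2\leq j\leq \ell$. Note that $j=2$ corresponds to two hyperedges sharing at least two vertices. A cycle of length $j$ is determined by:
\begin{itemize}
\item the vertices $v_1,\dots,v_j$, with at most $n^j$ choices;
\item for each $e_i$, its remaining $k-2$ vertices besides the two prescribed ones $v_i,v_{i+1}$, with at most $n^{k-2}$ choices each.
\end{itemize}
The $j$ hyperedges are distinct, so they are all present with probability $p^j$. Hence the expected number of such cycles is at most
\[
n^{j}\,n^{j(k-2)}\,p^{j}=n^{j(k-1)}\,n^{j(1-k+\theta)}=n^{j\theta}\leq n^{\ell\theta}=o(n).
\]
Summing over $j\leq\ell$ gives expected number $o(n)$. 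By Markov's inequality, for large $n$ there are fewer than $n/2$ short cycles with probability greater than $1/2$. This counting step is the one I expect to need the most care. One must make sure that every cycle in the sense of the definition (distinct vertices, distinct hyperedges) is captured by this encoding, including the degenerate case $j=2$. The constant factors depending on $j$ and $k$ are harmless.

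For (b), fix a $t$-set $T$. The probability that $T$ contains no hyperedge is
\[
(1-p)^{\binom{t}{k}}\leq \exp\!\Bigl(-p\binom{t}{k}\Bigr)\leq \exp\bigl(-c\,n^{1-k+\theta}n^{k}\bigr)=\exp(-c\,n^{1+\theta}),
\]
for some constant $c>0$ depending on $k$ and $m$. There are at most $2^n$ choices of $T$, so the union bound gives failure probability at most $2^n e^{-cn^{1+\theta}}\to 0$.

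Now take an outcome satisfying both (a) and (b). Delete one vertex from each cycle of length at most $\ell$, together with all hyperedges containing a deleted vertex. This leaves an induced subhypergraph $\mathbb{H}$ on more than $n/2$ vertices.
\begin{itemize}
\item Every cycle of $\mathbb{H}$ is a cycle of $\mathbb{H}_n$ that avoids all deleted vertices, so none has length $\leq\ell$. Thus $g(\mathbb{H})>\ell$.
\item Any independent set of $\mathbb{H}$ is independent in $\mathbb{H}_n$, because $\mathbb{H}$ is induced. So by (b) it has fewer than $t$ vertices.
\end{itemize}
In a proper colouring, each colour class is independent. Hence
\[
\chi(\mathbb{H})\geq \frac{n/2}{t-1}>\frac{n/2}{n/(2m)}=m,
\]
and $\mathbb{H}$ is the required hypergraph.
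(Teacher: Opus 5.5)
Your proposal is correct: it is the standard probabilistic deletion argument (random $k$-uniform hypergraph with edge probability $n^{1-k+\theta}$ where $\theta<1/\ell$, Markov's inequality for short cycles, a union bound ruling out independent sets of size $\lceil n/(2m)\rceil$, then deleting one vertex per short cycle), and the estimates, including the $j=2$ case and the final bound $\chi(\mathbb{H})>m$, all check out. The paper gives no proof of this lemma and simply cites Erd\H{o}s and Hajnal, whose original proof uses this same probabilistic method, so your write-up is a self-contained version of the cited result.
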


Now let $\mathbb{H}=\langle V, E\rangle$ be a $k$-uniform hypergraph,
with no isolated vertices (i.e., each vertex belongs to at least one hyperedge), and satisfying $g(\mathbb{H}) \geq 4$.
Notice that the condition $g(\mathbb{H}) \geq 4$ ensures that any two distinct hyperedges of $\mathbb{H}$ intersect in at most one vertex,
and that for all $1<\ell \leq k$, a set $\{v_1,\ldots,v_{\ell}\}$ is a subhyperedge (that is, a subset of some hyperedge) if and only if every $2$-element subset of $\{v_1,\ldots,v_{\ell}\}$ is a subhyperedge (see~\cite[Lemma 3.2]{jrz}).

Finally, we introduce two algebraic objects associated with $\mathbb{H}$: the hypergraph semiring $M_{\mathbb{H}}$,
and the associated term $\bt_{\mathbb{H}}$.

A \emph{hypergraph semiring} $M_{\mathbb{H}}$ defined by $\mathbb{H}$ is a flat semiring generated by a copy $\{\mathbf{a}_v \mid v \in V\}$ of $V$, along with special elements $0$ and $1$, and subject to the following rules:
\begin{enumerate}[$(1)$]
\item $0$ is the multiplicative zero element;
\item $1$ is the multiplicative identity;
\item $\mathbf{a}_u \mathbf{a}_v = \mathbf{a}_v \mathbf{a}_u$ for all $u, v \in V$;
\item $\mathbf{a}_{u_1} \cdots \mathbf{a}_{u_k} = \mathbf{a}_{v_1} \cdots \mathbf{a}_{v_k}$ whenever $\{u_1, \ldots, u_k\}, \{v_1, \ldots, v_k\} \in E$;
\item $\mathbf{a}_{u_1} \cdots \mathbf{a}_{u_{k-1}} = \mathbf{a}_{v_1} \cdots \mathbf{a}_{v_{k-1}}$ if there exists $v \in V$ such that
    \[\{u_1, \ldots, u_{k-1}, v\}, \{v_1, \ldots, v_{k-1}, v\} \in E.\]
\end{enumerate}
We let $\ba$ denote the common value of the products in item (4).
When $\mathbb{H}$ consists of a single hyperedge, the corresponding hypergraph semiring is denoted by $M_c(a_1\cdots a_k)$.

The term $\bt_{\mathbb H}$ is defined as follows.
Let $\{x_v \mid v \in V\}$ be a set of variables in bijection with $V$, and set
\[
\bt_{\mathbb H} = \sum_{\{v_1,v_2,\ldots,v_k\} \in E} x_{v_1} x_{v_2} \cdots x_{v_k}.
\]
Let $\varphi: \{x_v \mid v \in V\} \to M_{\mathbb{H}}$ be an assignment.
If $\varphi(x_v)=\mathbf{a}_v$ for all $v \in V$, then $\varphi(\bt_{\mathbb H})=\ba$.

\section{Power semirings of finite groups}
In this section, we use the hypergraph semiring approach to
establish a sufficient condition under which an ai-semiring variety is nonfinitely based.
As an application, we show that the power semiring $\mathcal{P}(G)$ is nonfinitely based whenever $|G| \geq 3$.
To this end, we introduce a three-element ai-semiring $S_7$ (see Table~\ref{tbs7} for its Cayley tables),
which will feature in the theorem below.

\begin{table}[ht]
\centering
\caption{The Cayley tables of $S_7$} \label{tbs7}
\begin{tabular}{c|ccc}
$+$      &$0$&$a$&$1$\\
\hline
$0$      &$0$&$0$&$0$\\
$a$      &$0$&$a$&$0$\\
$1$      &$0$&$0$&$1$\\
\end{tabular}\qquad\qquad
\begin{tabular}{c|ccc}
$\cdot$  &$0$&$a$&$1$\\
\hline
$0$      &$0$&$0$&$0$\\
$a$      &$0$&$0$&$a$\\
$1$      &$0$&$a$&$1$\\
\end{tabular}
\end{table}

Jackson et al.~\cite[Corollary~5.1]{jrz} showed that, up to isomorphism,
$S_7$ is the unique nonfinitely based ai-semiring of order at most three.
This distinguishes $S_7$ among all three-element ai-semirings and makes it a particularly interesting object of study.
Despite its seemingly simple appearance, with only three elements,
a commutative multiplication, and just five nonzero entries in its Cayley tables,
the finite basis problem surrounding $S_7$ turns out to be surprisingly rich and nontrivial.
From different perspectives, \cite{jrz, wrz, yrg, gjrz2} have demonstrated that
$S_7$ can transfer the nonfinitely based property to many other finite ai-semirings.
This naturally leads to the following problem, which was first explicitly posed by Gao et al.~\cite{gjrz2}
(a manuscript in preparation, not publicly available); see also \cite[Problem~1.1]{yrg}.

\begin{problem}\label{problem081410}
Is the ai-semiring $S_7$ strongly nonfinitely based?
\end{problem}

We shall use the following observation.

\begin{proposition}\label{pro26081201}
$\mathsf{V}(S_7)=\mathsf{V}(M_c(a_1\cdots a_k))$.
\end{proposition}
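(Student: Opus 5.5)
Here $k\geq 3$ is arbitrary, and $M_c(a_1\cdots a_k)$ is the flat semiring with elements $0$, $1$ and all nonzero products $a_{i_1}\cdots a_{i_r}$ of distinct generators, i.e.\ the products $\prod_{i\in I} a_i$ for $I\subseteq\{1,\ldots,k\}$; any product with a repeated generator is $0$. (Rule (5) is vacuous for a single hyperedge.) I would prove the two inclusions separately.

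For $\mathsf{V}(S_7)\subseteq\mathsf{V}(M_c(a_1\cdots a_k))$ I will exhibit $S_7$ as a subsemiring. Take $\{0, a_1, 1\}$. It is closed under multiplication, since $a_1^2=0$, $a_1\cdot 1=a_1$ and $1\cdot1=1$. It is closed under addition, since distinct elements sum to $0$ by flatness. Its Cayley tables coincide with Table~\ref{tbs7} under $a_1\mapsto a$.

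For the reverse inclusion I will embed $M_c(a_1\cdots a_k)$ into the direct power $S_7^k$. Define $\psi(0)=(0,\ldots,0)$, $\psi(1)=(1,\ldots,1)$, and for $\emptyset\neq I\subseteq\{1,\ldots,k\}$ let $\psi(\prod_{i\in I}a_i)$ be the tuple with $a$ in the coordinates $i\in I$ and $1$ elsewhere. Clearly $\psi$ is injective.

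Multiplication is preserved. In $S_7$ we have $a\cdot a=0$, $a\cdot 1=a$ and $1\cdot 1=1$, so the coordinatewise product of the images of $I$ and $J$ is the image of $I\cup J$ when $I\cap J=\emptyset$. When $I\cap J\neq\emptyset$, the product has a $0$ coordinate. This is where the embedding fails: such a tuple is not $(0,\ldots,0)$.

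Addition is also problematic. In $S_7$, $a+1=0$ and $a+a=a$, so the sum of the images of $I\neq J$ has $0$ exactly on $I\triangle J$ and is again not the zero tuple. The main obstacle is therefore that tuples with some $0$ coordinate must all be identified with $0$.

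I would handle this by passing to a quotient. Let $T\le S_7^k$ be the subsemiring generated by the image of $\psi$, and let $Z$ be the set of tuples with at least one $0$ coordinate. Because $0$ is absorbing in $S_7$ for both $+$ and $\cdot$, $Z$ is an ideal for both operations. Hence the Rees-type congruence collapsing $Z\cap T$ to a single point is a congruence. It remains to check that the elements of $T$ outside $Z$ are exactly $\psi$ of the nonzero elements of $M_c$. This holds because sums of distinct tuples and products of overlapping tuples land in $Z$. Then $T/(Z\cap T)\cong M_c(a_1\cdots a_k)$, giving $M_c(a_1\cdots a_k)\in\mathsf{HSP}(S_7)$ and completing the equality.
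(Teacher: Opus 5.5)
Your argument is correct. The first inclusion, via the subsemiring $\{0,a_1,1\}$, is exactly the paper's.

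The two routes differ on the reverse inclusion. The paper constructs nothing there; it simply quotes \cite[Proposition~2.6]{jrz} for $M_c(a_1\cdots a_k)\in\mathsf{V}(S_7)$. You instead give a self-contained $\mathsf{HSP}$ representation: you collapse the set $Z$ of tuples having a $0$ coordinate. The computations you record ($a\cdot a=0$, $a\cdot 1=a$, $a+1=0$, $a+a=a$) are exactly what is needed to show that $m\mapsto\psi(m)$ modulo $Z$ is an isomorphism onto the quotient.

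The detour through $T$ is unnecessary. $Z$ is absorbing for both operations in all of $S_7^k$, and $S_7^k\setminus Z=\{a,1\}^k=\psi\bigl(M_c(a_1\cdots a_k)\setminus\{0\}\bigr)$. Hence $S_7^k/Z\cong M_c(a_1\cdots a_k)$ directly, so $M_c(a_1\cdots a_k)\in\mathsf{HP}(S_7)$.

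The same observation makes your ``remains to check'' step immediate. The facts that sums of distinct tuples and products of overlapping tuples land in $Z$ are what you need for the homomorphism property, not for identifying $T\setminus Z$.

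The paper's route is shorter but rests on an external result. Yours is elementary and independent of the literature. It also gives the slightly sharper conclusion that $M_c(a_1\cdots a_k)$ is a Rees-type quotient of the finite power $S_7^k$.
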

\begin{proof}
Since $S_7$ is isomorphic to the subalgebra $\{0, a_1, 1\}$ of $M_c(a_1\cdots a_k)$,
it follows that $\mathsf{V}(S_7)$ is a subvariety of $\mathsf{V}(M_c(a_1\cdots a_k))$.
Conversely, by \cite[Proposition~2.6]{jrz}, $M_c(a_1\cdots a_k)$ is a member of $\mathsf{V}(S_7)$,
so $\mathsf{V}(M_c(a_1\cdots a_k))$ is a subvariety of $\mathsf{V}(S_7)$.
Therefore, $\mathsf{V}(S_7)=\mathsf{V}(M_c(a_1\cdots a_k))$.
\end{proof}

The next result reveals a connection between $S_7$ and the power semirings of groups.

\begin{proposition}\label{pro26081202}
Let $G$ be a group. Then $\mathcal{P}(G)$ contains a copy of $S_7$ if and only if $|G| \geq 3$.
\end{proposition}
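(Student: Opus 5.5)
We prove both directions by working directly with the Cayley tables of $S_7$ in Table~\ref{tbs7}. In $S_7$, $1$ is a multiplicative identity and $0$ is a multiplicative zero. The element $0$ is also the top element of the order, since $x+0=0$, and it is the sum of the two other elements. Finally, $a^2=0$, $a+1=0$, and $a$, $1$ are incomparable.

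\emph{Sufficiency.} Suppose $|G|\geq 3$. Let $e$ be the identity of $G$. We look for a three-element subset $\{Z,A,E\}\subseteq\mathcal{P}(G)$ with $E=\{e\}$ playing the role of $1$. The other two elements must satisfy
\[
A\cup E=Z,\quad AE=A,\quad A^2=Z,\quad ZA=AZ=Z^2=Z.
\]
The first condition forces $A=Z\setminus\{e\}$ or $A=Z$, and $A$ must be incomparable with $E$, so we take $e\notin A$ and $Z=A\cup\{e\}$. We then need $A^2=A\cup\{e\}$, and $(A\cup\{e\})A=A\cup A^2$, which automatically equals $A\cup\{e\}$ once $A^2=A\cup\{e\}$. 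Likewise $Z^2=\{e\}\cup A\cup A^2=Z$.

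So the task reduces to finding a nonempty $A\subseteq G\setminus\{e\}$ with $A^2=A\cup\{e\}$. The natural choice is $A=G\setminus\{e\}$, so that $Z=G$. We must check that $(G\setminus\{e\})^2=G$ when $|G|\geq3$. Given $g\in G$, we choose $h\neq e$ with $h\neq g$; this is possible because $|G|\geq3$. Then $g=h\cdot(h^{-1}g)$ with both factors different from $e$. Hence $A^2=G=A\cup\{e\}$, and all the required products hold. The map $0\mapsto G$, $a\mapsto G\setminus\{e\}$, $1\mapsto\{e\}$ is then checked against the tables to be an embedding. These sets are distinct because $G\setminus\{e\}$ is nonempty and proper.

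\emph{Necessity.} If $|G|=1$, then $\mathcal{P}(G)$ has one element, so it cannot contain $S_7$. If $|G|=2$, then $\mathcal{P}(G)$ has exactly three elements and is isomorphic to the semiring $A$ of Table~\ref{tbs10}. A copy of $S_7$ would therefore have to be all of $\mathcal{P}(G)$, forcing $A\cong S_7$. This is impossible: $A$ has two elements with square equal to the multiplicative identity, and the multiplicative identity of $S_7$ is unique, so any isomorphism must preserve this count. But in $S_7$ only $1$ squares to $1$, since $a^2=0$.

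The only mildly delicate step is the surjectivity $(G\setminus\{e\})^2=G$, and this is exactly where the hypothesis $|G|\geq 3$ enters: for $|G|=2$ we get $(G\setminus\{e\})^2=\{e\}$, which fails.
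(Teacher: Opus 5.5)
Your proof is correct and follows the paper's argument. You use the same subsemiring $\{\{e\}, G\setminus\{e\}, G\}$ and the same factorization $g = h\cdot(h^{-1}g)$ to show $(G\setminus\{e\})^2 = G$. For $|G|=2$, counting the elements that square to the identity is just a variant of the paper's observation that $\mathcal{P}(G)$ satisfies $x^3\approx x$ while $S_7$ does not.
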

\begin{proof}
Suppose that $|G| \geq 3$.
Let $H = G \setminus \{1\}$, where $1$ is the identity of $G$. Then $|H| \geq 2$.
We claim that $H^2=G$.
Indeed, for each $g \in G \setminus \{1\}$, choose $a \in H \setminus \{g\}$.
Then $a^{-1} \in H$, so $1 = a \cdot a^{-1} \in H^2$.
Since $a \neq g$, we have $a^{-1}g \in H$, and so $g = a \cdot (a^{-1}g) \in H^2$.
Thus $H^2=G$.
Therefore, $\mathcal{P}(G)$ contains the subsemiring
$\{\{1\}, H, G\}$, which is isomorphic to $S_7$ under the mapping
\[
\{1\} \mapsto 1,\quad H \mapsto a,\quad G \mapsto 0.
\]
We have shown that $\mathcal{P}(G)$ contains a copy of $S_7$.

Conversely, if $|G|=1$, then $\mathcal{P}(G)$ is trivial, and so it cannot contain a copy of $S_7$.
If $|G|=2$, then $\mathcal{P}(G)$ contains exactly three elements and satisfies the identity $x^3\approx x$,
whereas $S_7$ does not satisfy this identity.
Consequently, $\mathcal{P}(G)$ is not isomorphic to $S_7$.
\end{proof}

The following theorem is a modification of \cite[Theorem 2.2]{gjrz},
which has proved useful in several contexts (see~\cite{aj, gjrz2}).
Let us fix integers $k, m \geq 3$.
For each integer $n \geq 3$,
let $\mathbb{H}_n=\langle V(\mathbb H_n), E(\mathbb H_n)\rangle$ be a $k$-uniform hypergraph
such that $g(\mathbb{H}_n) > k\binom{kn}{2}$ and $\chi(\mathbb{H}_n) > m$;
the existence of such hypergraphs is guaranteed by Lemma~\ref{kml}.
Furthermore,
for each $n \geq 3$, let $\sigma_n$ be an identity that holds in $S_7$ but fails in $M_{\mathbb{H}_n}$,
and let $\mathcal{W}_{k,m}$ denote the ai-semiring variety
defined by the set of identities $\{\sigma_n \mid n \geq 3\}$.
Then $\mathsf{V}(S_7)$ is a subvariety of $\mathcal{W}_{k, m}$.

For ai-semiring varieties $\mathcal{V}_1$ and $\mathcal{V}_2$,
if $\mathcal{V}_1$ is a subvariety of $\mathcal{V}_2$,
we shall use the interval $[\mathcal{V}_1, \mathcal{V}_2]$ to denote
the set of all subvarieties of $\mathcal{V}_2$ that contain $\mathcal{V}_1$.
Since $\mathsf{V}(S_7)$ is a subvariety of $\mathcal{W}_{k, m}$,
the interval $[\mathsf{V}(S_7), \mathcal{W}_{k, m}]$ is well-defined.
We now state the following theorem.

\begin{theorem}\label{thm251117}
Let $k, m \geq 3$ be integers. Then
every variety in the interval $[\mathsf{V}(S_7), \mathcal{W}_{k, m}]$ is nonfinitely based.
\end{theorem}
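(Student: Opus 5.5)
Suppose, for a contradiction, that some variety $\mathcal{V}$ in $[\mathsf{V}(S_7), \mathcal{W}_{k,m}]$ has a finite basis $\Sigma$. I will show that for large $n$ the hypergraph semiring $M_{\mathbb{H}_n}$ satisfies every identity of $\Sigma$. Then $M_{\mathbb{H}_n}\in\mathcal{V}\subseteq\mathcal{W}_{k,m}$. On the other hand $M_{\mathbb{H}_n}$ fails $\sigma_n$, which holds in $\mathcal{W}_{k,m}$ by definition. This contradiction proves the theorem.

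Choose $n\ge 3$ so large that every identity in $\Sigma$ involves at most $n$ variables and every word in it has length at most $n$. Let $\bu\approx\bv$ be in $\Sigma$; it holds in $S_7$, hence in $M_c(a_1\cdots a_k)$ by Proposition~\ref{pro26081201}. Fix an assignment $\varphi$ into $M_{\mathbb{H}_n}$. The aim is to show $\varphi(\bu)=\varphi(\bv)$. Since $M_{\mathbb{H}_n}$ is flat, $\varphi(\bu)\neq 0$ means that every word of $\bu$ takes the same nonzero value $c$. It therefore suffices, by symmetry, to show: if $\varphi(\bu)=c\neq0$, then every word of $\bv$ also evaluates to $c$.

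The nonzero elements of $M_{\mathbb{H}_n}$ are $1$, products $\mathbf{a}_{v_1}\cdots\mathbf{a}_{v_\ell}$ with $\{v_1,\dots,v_\ell\}$ a subhyperedge, the classes from rule (5), and $\ba$. The letters appearing in the nonzero value $\varphi(\bu)$ involve at most $kn$ vertices in total, since there are at most $n$ variables, each mapped to a product of at most $k$ generators. The girth condition $g(\mathbb{H}_n)>k\binom{kn}{2}$ then ensures that the subhypergraph spanned by these vertices and the relevant hyperedges is a hyperforest. Moreover, its distinct hyperedges meet in at most one vertex.

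\textbf{Main obstacle.} The hard step is the transfer from this acyclic local configuration to $M_c(a_1\cdots a_k)$ (or to products of copies of it). The plan is to construct finitely many assignments $\psi$ into $M_c(a_1\cdots a_k)$, or homomorphisms from the relevant part of $M_{\mathbb{H}_n}$ onto such copies, with two properties. First, a word $\bw$ of $\bu$ or $\bv$ satisfies $\varphi(\bw)=c$ exactly when $\psi(\bw)=\psi(\bu)\neq0$ for all chosen $\psi$. Second, the data of $c$ is recovered from the $\psi$'s. The natural choice is to send the generators $\mathbf{a}_v$ of one hyperedge to $a_1,\dots,a_k$ and everything else to $1$ or $0$. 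The hyperforest structure is what makes this consistent: the local relations (4) and (5) are all tree-like and so can be checked one hyperedge at a time. I expect this to follow the proof of \cite[Theorem 2.2]{gjrz}, with care needed for the cases $c=1$, $c$ a proper subhyperedge product, $c$ a rule-(5) class, and $c=\ba$. The case $c=\ba$ may genuinely use several hyperedges sharing one vertex. The chromatic bound $\chi>m$ is needed only to make $M_{\mathbb{H}_n}$ fail $\sigma_n$, and that is already guaranteed by hypothesis. Once this is done, $\bu\approx\bv$ holding in $M_c$ yields $\varphi(\bv)=c$, which completes the argument.
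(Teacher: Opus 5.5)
Your reduction is the same as the paper's. A finite basis uses at most $n$ variables for some $n$, and $\sigma_n$ keeps $M_{\mathbb{H}_n}$ out of $\mathcal{V}$. So everything rests on showing that $M_{\mathbb{H}_n}$ satisfies every identity of $S_7$ in at most $n$ variables (equivalently, by Proposition~\ref{pro26081201}, every such identity of $M_c(a_1\cdots a_k)$). Put differently, every $n$-generated subsemiring of $M_{\mathbb{H}_n}$ must lie in $\mathsf{V}(M_c(a_1\cdots a_k))$.

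That statement is the whole mathematical content of the theorem, and your proposal does not prove it; this is a genuine gap. The paragraph labelled ``Main obstacle'' only describes a hoped-for family of assignments $\psi$ into $M_c(a_1\cdots a_k)$ with a separation property. Specifically:
\begin{enumerate}[$(1)$]
\item You never construct these maps $\psi$.
\item You do not check that they respect the relations holding among $\varphi(x_1),\dots,\varphi(x_n)$. Rule (5) ties each hyperedge to its neighbours, so labels must propagate through the local configuration; this is exactly where acyclicity and the specific girth bound are needed.
\item You do not verify the separation property in the four cases for $c$.
\item The hyperforest claim is asserted rather than derived from $g(\mathbb{H}_n)>k\binom{kn}{2}$. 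Also, values such as $\ba$ or rule-(5) classes have no canonical set of vertices, so the ``at most $kn$ vertices'' count needs more care than you give it.
\end{enumerate}

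The paper does not prove this step either. It imports it from the proof of \cite[Theorem~4.9, Remark~4.10]{jrz}: under the hypothesis $g(\mathbb{H}_n)>k\binom{kn}{2}$, every $n$-generated subsemiring of $M_{\mathbb{H}_n}$ belongs to $\mathsf{V}(M_c(a_1\cdots a_k))$. This is the reason that particular girth bound appears. Cite that result rather than trying to re-derive it along the lines of \cite[Theorem~2.2]{gjrz}, and your argument is complete: any assignment of the at most $n$ variables of an identity in $\Sigma$ lands in an $n$-generated subsemiring, which lies in $\mathsf{V}(S_7)\subseteq\mathcal{V}$ and hence satisfies $\Sigma$. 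This is exactly the paper's framing, namely that the $n$-variable identities of $\mathcal{V}$ never form a basis. Your extra requirement that words in $\Sigma$ have length at most $n$ then becomes unnecessary.
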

\begin{proof}
Let $\mathcal{V}$ be an arbitrary variety in the interval $[\mathsf{V}(S_7), \mathcal{W}_{k, m}]$.
Then $\mathcal{V}$ is a subvariety of $\mathcal{W}_{k, m}$ and contains $\mathsf{V}(S_7)$.
Since $\mathcal{W}_{k, m}$ is defined by $\{\sigma_n \mid n \geq 3\}$,
it follows that $\mathcal{V}$ satisfies the identity $\sigma_n$ for all $n \geq 3$.
The proof proceeds by showing that for every $n \geq 3$, the set of all $n$-variable identities of $\mathcal{V}$
does not form an equational basis of $\mathcal{V}$.
To establish this,
it suffices to prove that for every $n \geq 3$, the hypergraph semiring $M_{\mathbb{H}_n}$ does not lie in $\mathcal{V}$,
while every $n$-generated subsemiring of $M_{\mathbb{H}_n}$ does lie in $\mathcal{V}$.

Indeed, let $n \geq 3$ be an integer.
Since the identity $\sigma_n$ holds in $\mathcal{V}$ but fails in $M_{\mathbb{H}_n}$,
it follows that $M_{\mathbb{H}_n}$ is not in $\mathcal{V}$.
On the other hand, from the proof of \cite[Theorem~4.9, Remark~4.10]{jrz},
we know that every $n$-generated subalgebra $T$ of $M_{\mathbb{H}_n}$
lies in the variety $\mathsf{V}(M_c(a_1 \cdots a_k))$.
By Proposition~\ref{pro26081201},
$\mathsf{V}(M_c(a_1 \cdots a_k))=\mathsf{V}(S_7)$, which is a subvariety of $\mathcal{V}$.
Hence $T$ is in $\mathcal{V}$.

Therefore, $\mathcal{V}$ is nonfinitely based.
\end{proof}


\begin{corollary}\label{propinterval}
Let $G$ be a finite group. If $|G| \geq 3$,
then every variety in the interval $[\mathsf{V}(S_7), \mathsf{V}(\mathcal{P}(G))]$ is nonfinitely based.
In particular, the power semiring $\mathcal{P}(G)$ itself is nonfinitely based.
\end{corollary}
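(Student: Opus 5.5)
Since $|G|\geq 3$, Proposition~\ref{pro26081202} gives a copy of $S_7$ inside $\mathcal{P}(G)$, so $\mathsf{V}(S_7)\subseteq\mathsf{V}(\mathcal{P}(G))$ and the interval is well defined. By Theorem~\ref{thm251117} it then suffices to find integers $k,m\geq 3$ with $\mathsf{V}(\mathcal{P}(G))\subseteq\mathcal{W}_{k,m}$. Concretely, for a fixed choice of $k,m$ (depending on $|G|$), and for each $n\geq 3$, I need an identity $\sigma_n$ that holds in both $S_7$ and $\mathcal{P}(G)$ but fails in $M_{\mathbb{H}_n}$. Then $\mathcal{W}_{k,m}$, defined by these $\sigma_n$, contains $\mathcal{P}(G)$. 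The final claim about $\mathcal{P}(G)$ follows by taking the top of the interval.

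To build $\sigma_n$, I will exploit the two properties of $\mathbb{H}_n$: large chromatic number and large girth. Take $m=|G|$ (or any $m\geq\max(3,|G|)$) and $k\geq 3$ to be chosen. The candidate identity is the inequality
\[
\bt_{\mathbb{H}_n}^{\,r}\cdot \bt_{\mathbb{H}_n} \preceq \bt_{\mathbb{H}_n}
\]
or a variant of it, such as $\bt_{\mathbb{H}_n}\approx \bt_{\mathbb{H}_n}+\bu$ with $\bu$ a suitable word. The pattern to follow is \cite{gjrz}.

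The cleaner choice, which I would try first, is the identity
\[
\bt_{\mathbb{H}_n} \approx \bt_{\mathbb{H}_n} + \bt_{\mathbb{H}_n}^{\,e+1},
\]
where $e$ is the exponent of $G$. Its behaviour in each algebra is as follows.

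In $M_{\mathbb{H}_n}$, the identity fails under $x_v\mapsto \ba_v$. The left side evaluates to $\ba$, while $\ba^2=0$ (a product of $2k$ generators is $0$ in the flat hypergraph semiring), so the right side is $\ba+0=0\neq\ba$.

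In $S_7$, it holds. Any assignment either makes the sum $\bt$ equal to $0$, so both sides are $0$, or gives $\bt\in\{a,1\}$. If $\bt=1$, then every word is $1$ and both sides equal $1$. If $\bt=a$, the left side is $a$ and the right side is $a+a^{e+1}=a+0=0$, so the identity \emph{fails}.

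So this naive choice is wrong, and the chromatic number must genuinely be used. Following \cite{gjrz}, the right choice is an identity whose right-hand side adds a word $\bu$ that is ``monochromatic'' in the colouring sense. For example, add the sum over $v$ of $x_v^{\,k}$, or more precisely, for each hyperedge, words formed from a single variable. Such an addition is absorbed in $S_7$ and $\mathcal{P}(G)$ precisely because an evaluation gives a colouring of $V$ by elements of $G$ (or by the values of the $x_v$). Since $\chi(\mathbb{H}_n)>m\geq|\mathcal{P}(G)|$, some hyperedge $\{v_1,\dots,v_k\}$ is monochromatic, i.e.\ $\varphi(x_{v_1})=\dots=\varphi(x_{v_k})=A$. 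Then $A^k\subseteq\varphi(\bt)$ dominates the added term, provided the added term is designed to be $\leq A^k$, e.g.\ $\sum_v x_v^{k}$ with $k$ a multiple of $e$ so that the relevant powers coincide.

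So I would take $m=|\mathcal{P}(G)|=2^{|G|}-1$ and $k$ a multiple of $e$ with $k\geq 3$. I would then take $\sigma_n$ to be
\[
\bt_{\mathbb{H}_n}\approx \bt_{\mathbb{H}_n}+\sum_{v}x_v^{k}\cdots,
\]
adjusted so that every added word is dominated by some $x_{v_1}\cdots x_{v_k}$ with a monochromatic hyperedge. Failure in $M_{\mathbb{H}_n}$ is immediate, because $\ba_v^k=0$ for $k\geq 3$ by flatness and girth (a single vertex is not a hyperedge).

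The main obstacle is getting the added term exactly right. It must be absorbed under every assignment into $\mathcal{P}(G)$ and $S_7$, which needs domination by a monochromatic hyperedge product together with distinctness of the values for the remaining $x_v$. At the same time it must keep the identity failing in $M_{\mathbb{H}_n}$. I expect a term like $\sum_v x_v^{k}$ to fail absorption when $\varphi(x_v)$ for a nonmonochromatic $v$ has $A^k$ not contained in $\varphi(\bt)$. If that happens, I would replace it with the sum over all pairs $u,v$ lying in a common hyperedge of mixed words, still forcing containment via the monochromatic edge, and check carefully in $S_7$ that the value $0$ arises whenever it should.
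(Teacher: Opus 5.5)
There is a genuine gap. Beyond invoking Theorem~\ref{thm251117}, the substance of this corollary is the choice of $\sigma_n$, and your proposal never fixes one that works. Both of your explicit candidates fail in $\mathcal{P}(G)$. Fix $g\neq 1$ and let $B=G\setminus\{g\}$. Then $1\in B$ and $B^2=G$: write $g=b\cdot b^{-1}g$ with $b\notin\{1,g\}$, which exists because $|G|\geq 3$. Pick a vertex $v_0$ lying in some hyperedge, send $x_{v_0}$ to $B$, and send every other $x_v$ to $\{1\}$. Each hyperedge word evaluates to $B$ or $\{1\}$, so $\varphi(\bt_{\mathbb H_n})=B$. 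On the other hand, $\varphi(\bt_{\mathbb H_n}^{\,e+1})=B^{e+1}=G$ and $\varphi(x_{v_0}^{k})=B^k=G$, so neither added term is absorbed.

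This is structural, not a matter of tuning. The monochromatic hyperedge only controls $A^k$ for its own colour $A$, while products of the other, non-singleton values keep growing. Your fallback with ``mixed words over pairs'' is left unspecified.

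Your reason for discarding $\bt_{\mathbb H_n}\approx\bt_{\mathbb H_n}+\bt_{\mathbb H_n}^{\,e+1}$ is also mistaken. In $S_7$, $\varphi(\bt_{\mathbb H_n})=a$ would force the vertices sent to $a$ to meet every hyperedge in exactly one vertex. That gives a $2$-colouring with no monochromatic hyperedge, which is impossible since $\chi(\mathbb H_n)>m\geq 3$. That candidate fails in $\mathcal P(G)$, not in $S_7$.

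The missing idea is to record what the colouring argument actually gives and test it with a fresh variable. If $e\mid k$, a monochromatic hyperedge yields $1=h^k\in\varphi(\bt_{\mathbb H_n})$ for some element $h$ of its common colour, for every assignment $\varphi$ into $\mathcal P(G)$. This already works with your colouring by values ($m=2^{|G|}-1$). The paper colours instead by chosen elements $h_v\in\varphi(x_v)$, which lets it take $k=m=|G|$.

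The paper then takes $\sigma_n$ to be $z\preceq z\bt_{\mathbb H_n}$ with $z\notin c(\bt_{\mathbb H_n})$.
\begin{itemize}
\item It holds in $\mathcal P(G)$, because $\varphi(z)=\varphi(z)\{1\}\subseteq\varphi(z)\varphi(\bt_{\mathbb H_n})$.
\item It holds in $S_7$ automatically, since $S_7$ embeds in $\mathcal P(G)$ by Proposition~\ref{pro26081202}.
\item It fails in $M_{\mathbb H_n}$ under $z\mapsto 1$, $x_v\mapsto\ba_v$: then $\varphi(z\bt_{\mathbb H_n})=\ba$, while $1\not\leq\ba$ in a flat semiring.
\end{itemize}
The fresh variable $z$ is what lets the identity element $1$ of $M_{\mathbb H_n}$ be compared with $\ba$. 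With this $\sigma_n$, Theorem~\ref{thm251117} finishes the proof as you planned.
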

\begin{proof}
Assume that $|G| \geq 3$.
By Proposition~\ref{pro26081202},
$\mathsf{V}(S_7)$ is a subvariety of $\mathsf{V}(\mathcal{P}(G))$,
so the interval $[\mathsf{V}(S_7), \mathsf{V}(\mathcal{P}(G))]$ is well-defined.
To apply Theorem~\ref{thm251117}, set $k = m = |G|\geq 3$, and for each $n\geq 3$,
let $\sigma_n$ denote the inequality
\begin{equation}\label{id26081401}
z \preceq z \bt_{\mathbb H_n},
\end{equation}
where $z \notin c(\bt_{\mathbb H_n})$.

We first show that $\sigma_n$ does not hold in $M_{\mathbb H_n}$.
Indeed, let us take the assignment
$\varphi \colon c(\bt_{\mathbb H_n}) \cup \{z\} \to M_{\mathbb H_n}$
defined by $\varphi(x_v) = \ba_v$ and $\varphi(z) = 1$.
Then $\varphi(\bt_{\mathbb H_n})=\ba$, and so
\[
\varphi(z \bt_{\mathbb H_n})=\varphi(z)\varphi(\bt_{\mathbb H_n})=1\ba=\ba.
\]
Since $1 \not\leq \ba$, we obtain that $\varphi(z) \not\leq \varphi(\bt_{\mathbb H_n})$,
which implies that $\sigma_n$ does not hold in $M_{\mathbb H_n}$.

Finally, we show that $\sigma_n$ is satisfied by $\mathcal{P}(G)$.
Indeed, let $\varphi \colon c(\bt_{\mathbb H_n}) \cup \{z\} \to \mathcal{P}(G)$ be an arbitrary assignment.
Then $\varphi(x)$ is a nonempty subset of $G$ for every $x \in c(\bt_{\mathbb H_n}) \cup \{z\}$.
For each $v \in V(\mathbb H_n)$, choose $h_v \in \varphi(x_v)$, and define the mapping
\[
\psi \colon V(\mathbb H_n) \to \{\{h\} \mid h \in G\}, \quad v \mapsto \{h_v\}.
\]
Then $\psi(v)\subseteq \varphi(x_v)$.
Since $\chi(\mathbb H_n)>m = |G| = |\{\{h\} \mid h \in G\}|$, it follows that some hyperedge is monochromatic;
that is, there exists a hyperedge $\{u_1, u_2, \ldots, u_k\} \in E(\mathbb H_n)$ and an element $g \in G$ such that
\[
\{\psi(u_1), \ldots, \psi(u_k)\} = \{\{g\}, \{g\}, \ldots, \{g\}\}
\]
as a multiset. By Lagrange's theorem, $g^{|G|}=1$. Thus
\begin{equation*}
\begin{split}
\{1\}
&= \psi(u_1) \cdots \psi(u_k) \\
&\subseteq \bigcup_{\{v_1, \ldots, v_k\} \in E(\mathbb H_n)} \psi(v_1) \cdots \psi(v_k) \\
&\subseteq \bigcup_{\{v_1, \ldots, v_k\} \in E(\mathbb H_n)} \varphi(x_{v_1}) \cdots \varphi(x_{v_k})
= \varphi(\bt_{\mathbb H_n}),
\end{split}
\end{equation*}
and so
\[
\varphi(z) = \varphi(z) \cdot \{1\} \subseteq \varphi(z) \varphi(\bt_{\mathbb H_n})=\varphi(z\bt_{\mathbb H_n}).
\]
Hence $\mathcal{P}(G)$ satisfies the inequality $\sigma_n$,
and so $\mathcal{P}(G)$ lies in the variety $\mathcal{W}_{k,m}$.
Consequently, $\mathsf{V}(\mathcal{P}(G))$ is a subvariety of $\mathcal{W}_{k,m}$.

Therefore, by Theorem~\ref{thm251117},
every variety in the interval $[\mathsf{V}(S_7), \mathsf{V}(\mathcal{P}(G))]$ is nonfinitely based.
\end{proof}

Corollary~\ref{propinterval} completes the proof of Theorem~\ref{mainthm}.


\begin{corollary}
Let $\{G_i\}_{i\in I}$ be a finite collection of finite groups containing at least one group of order greater than $2$.
Then every variety in the interval
\[
[\mathsf{V}(S_7),\, \mathsf{V}(\{\mathcal{P}(G_i) \mid i\in I\})]
\]
is nonfinitely based.
In particular, the variety $\mathsf{V}(\{\mathcal{P}(G_i) \mid i\in I\})$ itself is nonfinitely based.
\end{corollary}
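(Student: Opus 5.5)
Since the collection is finite, set $N=\max_{i\in I}|G_i|\geq 3$ and apply Theorem~\ref{thm251117} with $k=m=N$, in the same way as in the proof of Corollary~\ref{propinterval}. First, the interval is well defined. Choose $j\in I$ with $|G_j|\geq 3$. By Proposition~\ref{pro26081202}, $\mathcal{P}(G_j)$ contains a copy of $S_7$. Hence
\[
\mathsf{V}(S_7)\subseteq\mathsf{V}(\mathcal{P}(G_j))\subseteq\mathsf{V}(\{\mathcal{P}(G_i)\mid i\in I\}).
\]

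For each $n\geq 3$, let $\sigma_n$ be the inequality $z\preceq z\bt_{\mathbb H_n}$ from \eqref{id26081401}, where $\mathbb H_n$ is now built with $k=m=N$. The argument in Corollary~\ref{propinterval} shows that $\sigma_n$ fails in $M_{\mathbb H_n}$. It also holds in $S_7$, since $S_7\in\mathsf{V}(\mathcal{P}(G_j))$ and $\sigma_n$ will be shown to hold in every $\mathcal{P}(G_i)$. So $\sigma_n$ is an admissible choice in the definition of $\mathcal{W}_{N,N}$.

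The only real step is to check that $\sigma_n$ holds in every $\mathcal{P}(G_i)$, including those with $|G_i|<N$. Fix an assignment $\varphi$ into $\mathcal{P}(G_i)$ and pick $h_v\in\varphi(x_v)$ for each vertex $v$. This gives a colouring of $V(\mathbb H_n)$ with at most $|G_i|\leq N=m$ colours. Because $\chi(\mathbb H_n)>m$, some hyperedge $\{u_1,\dots,u_N\}$ is monochromatic, with common colour $g$. The hyperedge has exactly $N$ vertices, so the corresponding product is $g^N$. Here we need $g^N=1$, which is stronger than the Lagrange argument used before: Lagrange gives $g^{|G_i|}=1$, but $N$ need not be a multiple of $|G_i|$.

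This exponent mismatch is the main obstacle. I would remove it by choosing $k=m=L$ instead of $N$, where $L$ is the least common multiple of the $|G_i|$ (or their product). Then $L\geq N\geq 3$ and $|G_i|\mid L$ for every $i$, so $g^L=1$ for all $g\in G_i$. The number of colours is $|G_i|\leq L=m$, so the colouring argument still applies. The same chain of inclusions as in Corollary~\ref{propinterval} then gives $\{1\}\subseteq\varphi(\bt_{\mathbb H_n})$, and therefore $\varphi(z)\subseteq\varphi(z\bt_{\mathbb H_n})$.

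Thus every generator $\mathcal{P}(G_i)$ lies in $\mathcal{W}_{L,L}$, and so $\mathsf{V}(\{\mathcal{P}(G_i)\mid i\in I\})\subseteq\mathcal{W}_{L,L}$. The interval in the statement is therefore contained in $[\mathsf{V}(S_7),\mathcal{W}_{L,L}]$. Theorem~\ref{thm251117} shows that every variety in this larger interval is nonfinitely based, which proves both assertions.
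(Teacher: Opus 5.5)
Your proposal is correct and follows the paper's argument: take $k=m$ equal to the least common multiple of the orders $|G_i|$, and rerun the proof of Corollary~\ref{propinterval} with the inequalities $z\preceq z\bt_{\mathbb H_n}$, verifying them in each $\mathcal{P}(G_i)$ via a monochromatic hyperedge and $g^{k}=1$. You are also right that $k=\max_i|G_i|$ would not suffice: if some $G_i$ has order $2$ and $k=3$, assigning $\{g\}$ with $g\neq 1$ to every $x_v$ and $\{1\}$ to $z$ violates $\sigma_n$ in $\mathcal{P}(G_i)$.
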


\begin{proof}
This is completely analogous to the proof of Corollary~\ref{propinterval};
it suffices to take both $k$ and $m$ to be the least common multiple of the integers $\{|G_i| : i\in I\}$; we omit the details.
\end{proof}

%

\section{Conclusion}
We have completely solved the finite basis problem for the power semirings $\mathcal{P}(G)$ of finite groups $G$:
they are finitely based whenever $|G| \leq 2$, and nonfinitely based otherwise.
This provides a new piece of evidence for Problem~\ref{problem081410}.
The corresponding problem for $\mathcal{P}(G)^0$, however, remains open.

If $|G|=1$, then $\mathcal{P}(G)^0$ is a two-element distributive lattice,
whose variety is defined by the identities
\[
x^2\approx x, \quad xy\approx yx, \quad xy \preceq x
\]
(see~\cite[Table~1]{sr}).
If $|G|=2$, then $\mathcal{P}(G)^0$ is isomorphic to the four-element ai-semiring $A^0$
(where $A$ is the three-element ai-semiring given in Table~\ref{tbs10}),
whose variety is defined by the identities
\[
x^3 \approx x, \quad xy \approx yx
\]
(see~\cite[Lemma~3.7]{rz}).
For $|G|\geq 3$, we have not been able to apply Theorem~\ref{thm251117} to $\mathcal{P}(G)^0$:
the identities \eqref{id26081401} used for $\mathcal{P}(G)$ fail in $\mathcal{P}(G)^0$,
and no suitable alternatives have yet been found.
Whether a different choice of identities could render the theorem applicable remains open.

We are inclined to believe that $\mathcal{P}(G)^0$ is nonfinitely based whenever $|G| \geq 3$.
If it is true, this will contribute to Problem~\ref{prob123}.
If it fails, this will provide the first example of a nonfinitely based ai-semiring $S$
whose extension $S^0$ is finitely based,
and the first example of a nonfinitely based ai-semiring that is not SNFB.
Either outcome would therefore deepen our understanding of the subtle relationship between an ai-semiring $S$ and its extension $S^0$.

\subsection*{Acknowledgment}
Miaomiao Ren, corresponding author, is supported by National Natural Science Foundation of China (12371024, 12571020).
Mengya Yue is supported by the Research Innovation Project for Postgraduates of Northwest University (CX2026052).

\end{document}